\g@addto@macro{\endabstract}{\@setabstract}
\newcommand{\authorfootnotes}{\renewcommand\thefootnote{\@fnsymbol\c@footnote}}%
\numberwithin{equation}{section}
\newtheorem{theorem}{Theorem}[section]
\newtheorem{lemma}[theorem]{Lemma}
\newcommand{\ua}{\uparrow}
\newcommand{\da}{\downarrow}
\newcommand{\aside}[1]{\noindent\emph{#1}}
\begin{document}

\begin{center}
  \LARGE 
  Universality of single quantum gates \par \bigskip

  \normalsize
  \authorfootnotes
  Bela Bauer\textsuperscript{1},
  Claire Levaillant\textsuperscript{2},
  Michael Freedman\textsuperscript{1} \par \bigskip

  \textsuperscript{1}Station Q, Microsoft Research, Santa Barbara, CA 93106-6105, USA \par
  \textsuperscript{2}Department of Mathematics, University of California, Santa Barbara, CA 93106, USA\par \bigskip

  \today
\end{center}

\vspace{0.2in}

The quantum circuit model~\cite{Deutsch} envisions quantum information initialized in disjoint degrees of freedom, usually taken to be qubits (copies of $\mathbb{C}^2 = \operatorname{span}\{\ua,\da\}$)---the only case we consider. The information is then processed through a sequence of unitary ''gates'', each acting on a small number (usually $1$, $2$, or $3$) of tensor factors. Finally one or more qubits is read out by measuring in the $\sigma_z = \begin{array}{|rr|} 1 & 0 \\ 0 & -1 \end{array}$ basis. We treat the $2$-qubit {\sc Swap} gate
\begin{center}
    \begin{tabular}{c|cccc|}
        \multicolumn{1}{r}{} & $\ua\ua$ & $\ua\da$ & $\da\ua$ & \multicolumn{1}{r}{$\da\da$} \\
        $\ua\ua$ & $1$ & $0$ & $0$ & $0$ \\
        $\ua\da$ & $0$ & $0$ & $1$ & $0$ \\
        $\da\ua$ & $0$ & $1$ & $0$ & $0$ \\
        $\da\da$ & $0$ & $0$ & $0$ & $1$
    \end{tabular}
\end{center}
as inherent to the circuit model, i.e. the timelines of qubits can be permuted. Thus, for example, a given $2$-qubit gate can always be applied to any pair of qubits and in either order.

The proof of ``universality'' of a given set of gates, i.e. universality for the class BQP (polynomial time quantum computation), consists of two steps:
\begin{enumerate}
    \item showing that such a gate set is dense in $PU(2^n)$ for all $n$ ($=$ $\sharp$ of qubits in system), and
    \item checking polynomial efficiency, which is an exercise in the Kitaev-Solovay (K-T) algorithm~\cite{NC}.
\end{enumerate}
It is known \cite{BB} that if the single-qubit gates alone are dense in the projective unitary group $PU(2) = U(2)/U(1) \cong SO(3)$, then adding any additional $2$-qubit gate which is \emph{entangling} makes the gate set universal ($G$ ``entangling'' means there exists a vector $\phi\otimes\psi$ so that $G(\phi\otimes\psi)\neq \phi^\prime\otimes\psi^\prime$ for any $\phi^\prime$ and $\psi^\prime$).

Our theorem will also comprise these two aspects listed above but we will not comment on the efficiency aspect since this is by now routine and parallel to the discussion of Refs.~\cite{NC,DiVinc}.

\begin{theorem}
For some open dense set $\mathcal{O}\subset PU(4) = U(4)/U(1)$, of projective unitaries on $\mathbb{C}^2\otimes\mathbb{C}^2$, any gate $G\in\mathcal{O}$ is by itself universal for the class BQP (polynomial time quantum computation).
\end{theorem}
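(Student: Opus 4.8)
The plan is to exploit that the {\sc Swap} gate, call it $S$, is built into the model: a ``single gate $G$'' really denotes the gate set $\{G,S\}$. I will take
\[
\mathcal O=\{\,G\in PU(4):\ \overline{\langle G,S\rangle}=PU(4)\,\},
\]
closure taken in $PU(4)=U(4)/U(1)$ acting on $\mathbb{C}^2\otimes\mathbb{C}^2$ and $\langle G,S\rangle$ the abstract group generated by $G$ and $S$. Granting that $\mathcal O$ is open and dense (the real content, below), universality of each $G\in\mathcal O$ is quick. On $n$ qubits, let $\Gamma_n$ be the group generated by $G$ and $S$ applied to all pairs of tensor factors. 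Since $PU(4)\hookrightarrow PU(2^n)$, $A\mapsto A\otimes 1^{\otimes(n-2)}$, is a closed embedding, $\overline{\Gamma_n}$ contains the whole copy of $PU(4)$ acting on qubits $1,2$; in particular it contains every one-qubit gate on qubit $1$, hence --- conjugating by the {\sc Swap}s that transpose qubit $1$ with qubit $k$ --- every one-qubit gate on every qubit, and it contains an entangling gate such as {\sc Cnot}. By the criterion of \cite{BB} recalled above, $\overline{\Gamma_n}=PU(2^n)$ for all $n$, and the polynomial-efficiency half of universality is the by-now routine Solovay--Kitaev argument of \cite{NC,DiVinc}. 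Note also that every $G\in\mathcal O$ is automatically entangling, since a non-entangling $G$ would lie, together with $S$, in the proper closed subgroup of ``local'' gates, contradicting density.

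It remains to prove $\mathcal O\subset PU(4)$ is open and dense. \emph{Openness:} $G\notin\mathcal O$ precisely when $G$ and $S$ lie in a common maximal proper closed subgroup $M$ of $PU(4)$. Equipping the space of closed subgroups of the compact group $PU(4)$ with the Chabauty topology makes it compact, the closed subgroups containing $S$ form a compact subspace, the incidence set $\{(M,x):x\in M\}$ is closed, and projecting the closed set of pairs $(M,x)$ with $S\in M$, $x\in M$, $M$ maximal proper, onto the $x$-coordinate shows the complement of $\mathcal O$ closed. (Alternatively, use that $PU(4)$ has finitely many conjugacy classes of maximal closed subgroups.)

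\emph{Density} is the heart of the matter. Topological generators of maximal tori are dense in $PU(4)$; if $G$ topologically generates a maximal torus $T$, then $\langle G,S\rangle$ and $\langle T,S\rangle$ have the same closure, whose Lie algebra contains the Cartan subalgebra $\mathfrak t$ and its image $\operatorname{Ad}_S\mathfrak t$. So it suffices to show that, for a dense family of maximal tori $T$, the Lie subalgebra of $\mathfrak{pu}(4)$ generated by $\mathfrak t$ and $\operatorname{Ad}_S\mathfrak t$ is all of $\mathfrak{pu}(4)$. For a fixed Cartan $\mathfrak t$ the proper subalgebras containing it are the finitely many regular subalgebras $\mathfrak t\oplus\bigoplus_{\alpha\in R'}\mathfrak g_\alpha$ attached to closed symmetric subsets $R'$ of the root system, and each contains only a strictly lower-dimensional family of Cartan subalgebras of $\mathfrak{pu}(4)$; hence a second Cartan subalgebra in general position relative to $\mathfrak t$ is contained in none of them and the two generate $\mathfrak{pu}(4)$. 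That $\operatorname{Ad}_S\mathfrak t$ is in general position relative to $\mathfrak t$ for generic $T$ need only be checked at a single torus, the condition failing on a proper subvariety of the space of tori: one takes $T=u\,T_{\mathrm{diag}}\,u^{-1}$ for a suitable unitary $u$ --- \emph{not} $u=1$, since $S$ normalises the diagonal torus --- and verifies by a direct computation of brackets that $\langle\mathfrak t,\operatorname{Ad}_S\mathfrak t\rangle$ attains dimension $15=\dim\mathfrak{pu}(4)$. Finally, a regular element lies in a unique maximal torus, so the union of the ``bad'' tori --- a proper subvariety of the $12$-dimensional space of maximal tori --- is a subset of $PU(4)$ of dimension $\le 14$, hence nowhere dense; the topological generators of ``good'' tori are therefore dense, which is what we needed.

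The principal obstacle is the density step and, inside it, the statement that two Cartan subalgebras in general position generate $\mathfrak{pu}(4)\cong\mathfrak{su}(4)$; the reduction via finiteness of regular subalgebras turns this into a single explicit $4\times 4$ bracket computation, which I expect to be easy but which must actually be performed. The remaining ingredients --- the ``dense along generic fibres over a dense base implies dense'' bookkeeping for $\{(T,x):x\in T\}\to PU(4)$, and the Chabauty argument for openness --- are routine.
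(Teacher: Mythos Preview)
Your approach is sound and genuinely different from the paper's. The paper works with a single Lie-algebra element: it exhibits (by an explicit integer-arithmetic computation) one $t_0\in\mathfrak{su}(4)$ whose iterated brackets with $t_0'=\operatorname{Ad}_{\textsc{Swap}}t_0$ span $\mathfrak{su}(4)$, notes that this spanning is a Zariski-open (polynomial $\det\neq0$) condition so holds on an open dense set of $t$, and then spends most of its effort on the passage from Lie algebra to group---the commutator/CBH mechanism only gives dense generation by $\{e^{\epsilon t},e^{\epsilon t'}\}$ for $\epsilon$ small, so a further ``power trick'' (their Lemma~2 on one-parameter subgroups) is needed to promote this from a neighbourhood of the identity to an open dense subset of $SU(4)$. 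Your route through maximal tori bypasses that analytic bridge entirely: once $G$ topologically generates a torus $T$, the closed group $\overline{\langle G,S\rangle}$ already contains $T$ and $STS$, so its Lie algebra contains $\langle\mathfrak t,\operatorname{Ad}_S\mathfrak t\rangle$, and if that equals $\mathfrak{su}(4)$ you are finished---no CBH estimates, no $\epsilon$, no powers. The trade-off is that your openness argument leans on structure theory (finitely many conjugacy classes of maximal closed subgroups of a compact Lie group), whereas the paper's openness emerges from its explicit construction; note that your Chabauty variant ultimately rests on the same finiteness, since a Chabauty limit of maximal proper closed subgroups is not \emph{a priori} proper. Both proofs bottom out in one explicit bracket computation; the paper actually carries it out, and in fact their $t_0$ already certifies your claim (any Cartan $\mathfrak t\ni t_0$ satisfies $\langle\mathfrak t,\operatorname{Ad}_S\mathfrak t\rangle\supseteq\langle t_0,t_0'\rangle=\mathfrak{su}(4)$), so the gap you flag can be closed by exactly that calculation.
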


\aside{Remark 1}. Given \cite{BB} it is sufficient to prove that any element of $PU(4)$ can be approximated by some composition of $G$ and {\sc Swap} gates. For among these elements will be (up to phase) general transformations of the form $A\otimes\text{id}$, $A\in U(2)$. Thus the general single-qubit gate is a consequence of denseness in $PU(4)$. (Again the efficiency estimates are routine applications of the K-S algorithm and will not be given.)

\aside{Remark 2}. In the early days of the subject, the above theorem was stated by S.~Lloyd \cite{Lloyd}. After submission of this preprint, we were made aware of proofs of the statement in Refs.~\cite{Deutsch95,Weaver2000,Childs2010}, but decided to leave this manuscript available to add another perspective.

\begin{proof}
For some open dense $\mathcal{O}\subset PU(4)$, we will prove that for $G\in\mathcal{O}$, the set $\{G, \textsc{Swap}\circ G\circ\textsc{Swap}\}$ densely generates $PU(4)$; as remarked this is sufficient. First we exhibit a single $G$ with this property and then consider genericity.

Technically it is better to work with $su(4)$, the Lie algebra $\mathfrak g$ of $PU(4)$. We find by brute force an element $t\in \mathfrak g$ so that together with $\text{Ad}_\textsc{Swap}(t) := t^\prime$, these two elements generate $\mathfrak g$ as a Lie algebra. Explicitly if $t^i_j = t^{\alpha\gamma}_{\beta\delta}$ with $\alpha$, $\beta$, $\gamma$, $\delta$ qubit indices, then $t^{\prime\;\gamma\alpha}_{\delta\beta} = t^{\alpha\gamma}_{\beta\delta}$.

To avoid estimating round-off errors we used exact arithmetic, choosing $t_0$ essentially at random from traceless $4\times 4$ skew Hermitian matrices with entries of the form $a^i_j + \i b^i_j$, $1\leq i$, $j\leq 4$, $a$, $b$ small integers. Having chosen $t_0$ and computed $t_0^\prime$ we randomly applied Lie-brackets to produce new elements until some subset of $15$ ($= \operatorname{dim} su(4)$) of the matrices thereby produced became linearly independent. This was established by finding a \emph{non-zero determinant} when each of the $15$ matrices was itself regarded as a row vector of length $15$ within a $15\times 15$ matrix. (The $16^\text{th}$ entry is determined by the trace $=0$ condition and was therefore omitted.) Below we call this $15 \times 15$ determinant "$\mathrm{det}$".

Next we show that for some open dense set $\mathcal{Q} \subset su(4)$, $t\in\mathcal{Q}$ implies that $\{t,t^\prime\}$ generate $su(4)$ as a Lie algebra.
We checked Lie-generation of $su(4)$ by verifying an \emph{open} condition: $\text{det}\neq 0$.
The condition $\mathrm{det} = 0$ defines a (projective) real algebraic variety $\mathcal V$ inside $\mathbb{R}^{15}$ identified with $su(4)$ by
$$M_{ij} = (\sqrt{-1} M_{11},\ldots ,\sqrt{-1} M_{44}, \operatorname{Re} M_{12},\sqrt{-1} \operatorname{Im} M_{12},\ldots,\operatorname{Re} M_{34}, \sqrt{-1}\operatorname{Im} M_{34}).$$
\emph{Projective} means that the variety is a union of lines through the origin: $\vec v \in \mathcal V \implies a \vec{v} \in \mathcal V$.
A standard result~\cite{Hartshorne}, based on the implicit function theorem, states that varieties in $\mathbb{R}^n$ which are proper subsets of $\mathbb{R}^n$ are nowhere dense. Thus the points of $\mathbb{R}^{15}$ where $\mathrm{det} \neq 0$ form an open dense subset.
It is interesting that this use of algebraic geometry can be replaced by a short self-contained number-theoretic lemma, see the Appendix.

A fundamental link between the bracket of a Lie algebra and commutators in the group is the identity
$$
[s,t] = \lim_{\epsilon \rightarrow 0} \frac{1}{\epsilon^2} \left( e^{\epsilon s} e^{\epsilon t} e^{-\epsilon s} e^{-\epsilon t} \right)
$$
It follows that if some collection of brackets applied to $\lbrace t,t' \rbrace$ generate $su(4)$, then for $\epsilon$ sufficiently small, the elements $\lbrace e^{\epsilon t}, e^{\epsilon t'}\rbrace$ generate a dense subgroup of $SU(4)$. This is the key observation of the Kitaev-Solovay (K-S) algorithm~\cite{NC}. Unfortunately K-S does not give a uniform upper bound on the Killing norm $\Vert \epsilon t \Vert_K$ below which there is dense generation. This is because near the variety $\mathcal{V}$, $\epsilon$ will have to be smaller for the higher degree terms of the Campbell-Baker-Hausdorf (CBH) formula not to spoil the linear independence of commutations of the logarithms.

However the upper bound on $\Vert \epsilon t \Vert_K$ is a continuous function of the direction $t / \Vert t \Vert$, which we will refer to as $\epsilon_0(t/\Vert t \Vert_K)$. This function vanishes precisely along the projective directions of $\mathcal V$ where $\mathrm{det} = 0$. Thus there is an open subset of $SU(4)$:
$$
\mathcal{O} := \exp \lbrace t \in su(4) \setminus \mathcal{V} \  \big| \ \  \Vert t \Vert_K < \epsilon_0(t/\Vert t \Vert_K) \rbrace
$$
where K-S applies and $\lbrace e^{t}, e^{t'} \rbrace$
generate a dense subgroup of $SU(4)$. $\mathcal{O}$ is certainly \emph{not} dense, but fortunately there is a simple extension of K-S which removes the upper bound on $\Vert t \Vert$.

Consider the closed set $C$ of one-parameter subgroups of $SU(4)$ obtained as $\lbrace e^{x t} \rbrace$, where $\mathrm{det}(t) \neq 0$. Let $\bar{C}$ be the closed subset of $SU(4)$ which is the union over $C$. Since conjugation by {\sc Swap} commutes with taking powers, we may replace $g \in SU(4)$ with any power of $g$ in the search for a densely generating pair $\lbrace g,g' \rbrace := \lbrace g, {\sc Swap} \circ g \circ {\sc Swap} \rbrace$. Consider those $g$ belonging to the open dense subset $SU(4) \setminus C$ which obey the further condition that $g^k \in \mathcal{O}$ for some $k=1,2,3,\ldots$. Call this subset $\mathcal{U} \subset SU(4)$. Since $\mathcal{O}$ is open and group multiplication is continuous, $\mathcal{U}$ is a union of open subsets, thus $\mathcal{U}$ is an open subset. Now consider:

\begin{lemma}
Let $P \in G$ be a one-parameter subgroup of a compact Lie group $G$, with its induced topology. Let $I \subset P$ be any interval, then $\bigcup_{k \in \mathbb{Z}} I^k$ is dense in $P$.
\end{lemma}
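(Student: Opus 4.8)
The plan is to reduce to the classical density statement for irrational rotations of the circle, or rather its higher-dimensional analogue (Weyl's equidistribution theorem), by analyzing the closure $\bar P$ of $P$ inside $G$. First I would observe that since $G$ is compact and $P$ is a one-parameter subgroup, the closure $\bar P$ is a compact connected abelian subgroup of $G$, hence a torus $T^d$ for some $d\ge 1$ (the case $d=0$ being vacuous, and $d=1$ the case where $P$ is already closed, a circle). The interval $I\subset P$ is a connected subset of $P$; under the identification $P\cong\mathbb R$ (if $P$ is not closed) or $P\cong\mathbb R/\mathbb Z$ (if it is), $I$ is a genuine interval, and in either case $I$ projects to a subarc of $\bar P=T^d$ with nonempty interior, in fact a ``line segment'' in the torus along the direction determined by the generator of $P$.

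Next I would set up coordinates: write $\bar P=\mathbb R^d/\mathbb Z^d$ and let $P=\{t\mapsto (t v_1,\dots,t v_d)\bmod\mathbb Z^d\}$ where $v=(v_1,\dots,v_d)$ is a vector whose coordinates are linearly independent over $\mathbb Q$ together with $1$ in each of the right combinations — precisely the condition that guarantees $P$ is dense in $T^d$ rather than contained in a subtorus. The set $\bigcup_{k\in\mathbb Z} I^k$ then corresponds, additively, to $\bigcup_{k\in\mathbb Z} k\cdot J$ where $J$ is the image of $I$, a segment of the line $\mathbb R v$ inside $T^d$. Because $I$ has nonempty interior in $P$, the segment $J$ has positive length, so it contains a sub-segment of the form $\{s v : s\in[a,b]\}\bmod\mathbb Z^d$ with $a<b$. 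It then suffices to show $\bigcup_k \{ksv: s\in[a,b]\}$ is dense in $T^d$.

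For this I would pick any rational point $s_0=p/q\in(a,b)$ and consider the subgroup generated by $s_0 v\bmod\mathbb Z^d$; since the coordinates of $v$ are rationally independent (in the appropriate sense making $P$ dense), $s_0 v=(p/q)v$ also generates a dense subgroup of $T^d$ — multiplying a generating direction by a nonzero rational does not shrink the closure of the cyclic group it generates, as $\overline{\langle s_0 v\rangle}\supseteq \overline{\langle q s_0 v\rangle}=\overline{\langle p v\rangle}=\overline{\langle v\rangle}=T^d$ using that $\gcd$-type arguments let one recover $\langle v\rangle$ up to finite index, and finite-index subgroups of a group with dense closure still have dense closure. Hence $\{k s_0 v : k\in\mathbb Z\}$ is already dense in $T^d$, and a fortiori $\bigcup_k\{ksv:s\in[a,b]\}\supseteq\{ks_0v:k\in\mathbb Z\}$ is dense in $\bar P$. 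Since $\bigcup_k I^k\subseteq P\subseteq\bar P$ and its closure contains the dense set just exhibited, $\bigcup_k I^k$ is dense in $P$ (indeed in $\bar P$, which contains $P$ densely, so certainly dense in $P$).

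The main obstacle I anticipate is the bookkeeping around the two cases ($P$ closed versus $P$ dense-in-a-torus) and, in the dense case, making the ``rationally independent direction times a rational is still a generating direction'' step fully rigorous: one must be careful that scaling $v$ by $p/q$ can introduce torsion, so the cyclic group $\langle (p/q)v\rangle$ may differ from $\langle v\rangle$ by finite index and by a torsion subgroup, but neither operation affects the \emph{closure}, which is all that matters. An alternative, perhaps cleaner, route avoiding the number theory entirely: note $\bigcup_{k}I^k$ is a subgroup-like set — more precisely, if $g_0\in I$ is an interior point then for small $\eta$ the arc $g_0\cdot[-\eta,\eta]\subseteq I$, and taking $k$-th powers, $\bigcup_k g_0^k\cdot[-k\eta,k\eta]$ covers arcs around each $g_0^k$ of unboundedly growing (in $P$-arclength) size; since $\{g_0^k\}$ is either all of a finite cyclic group times a dense-in-$T^d$ set, or itself dense, the union of these growing arcs exhausts $\bar P$. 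I would present whichever of these is shorter in the final writeup, but the torus-closure structure theorem for one-parameter subgroups of compact Lie groups is the common backbone.
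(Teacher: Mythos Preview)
Your overall structure---pass to the torus closure $\bar P=T^d$ and split into the circle case versus the dense-line case---matches the paper and is sound. But your primary argument in the dense-line case has a genuine gap. Density of the one-parameter subgroup $\{tv:t\in\mathbb{R}\}$ in $T^d$ only says that $v_1,\dots,v_d$ are $\mathbb{Q}$-linearly independent; it does \emph{not} say that $1,v_1,\dots,v_d$ are, and that stronger condition is what Kronecker's theorem requires for the \emph{cyclic} group $\langle v\rangle$ to be dense in $T^d$. Concretely: take $d=2$ and $v=(1,\sqrt{2})$. The line $t\mapsto(t,t\sqrt{2})$ is injective and dense in $T^2$, yet $\langle v\rangle=\{(0,k\sqrt{2}\bmod 1):k\in\mathbb{Z}\}\subset\{0\}\times T^1$ is not dense in $T^2$. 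Thus your chain $\overline{\langle s_0 v\rangle}\supseteq\overline{\langle pv\rangle}=\overline{\langle v\rangle}=T^d$ breaks at the last equality, and taking $s_0$ rational cannot repair this.

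Your alternative route is essentially the paper's argument, but the justification you offer for it (``$\{g_0^k\}$ is \dots itself dense'') repeats the same mistake. The correct reason, and the one the paper uses, bypasses discrete orbits entirely: $I^k$ is an interval of $P$ of length $k\cdot\mathrm{length}(I)$. If $P$ is a circle or $\mathrm{id}\in I$, these intervals already exhaust $P$. Otherwise, writing $I=[a,b]$ in the line parameter with $0<a<b$, once $k>a/(b-a)$ the consecutive intervals $[ka,kb]$ overlap, so $\bigcup_{k\in\mathbb{Z}}I^k$ contains all of $P$ outside a bounded segment, in particular a full half-line of $P$; and a half-line of an irrational-slope line is still dense in $T^d$ (its closure is a closed sub-semigroup of a compact group, hence a subgroup, containing all of $P$). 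So the growing-arcs picture is right, just not for the reason you stated.
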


\begin{proof}
The power $I^k$ is either all of $P$ or is itself an interval of $P$ of length $k \cdot \text{length}(I)$. The first case occurs if $\mathrm{id} \in I$ or $P$ is a circle subgroup. If $\mathrm{id} \notin I$ and $P$ is noncompact, then $\lbrace I^k \rbrace$ consist of non-nested intervals of increasing length on a curve of irrational slope on a $d$-torus. Using this model, the lemma reduces to the well-known fact that lines of irrational slope are dense in the d-torus.
\end{proof}

\begin{figure}
  \includegraphics{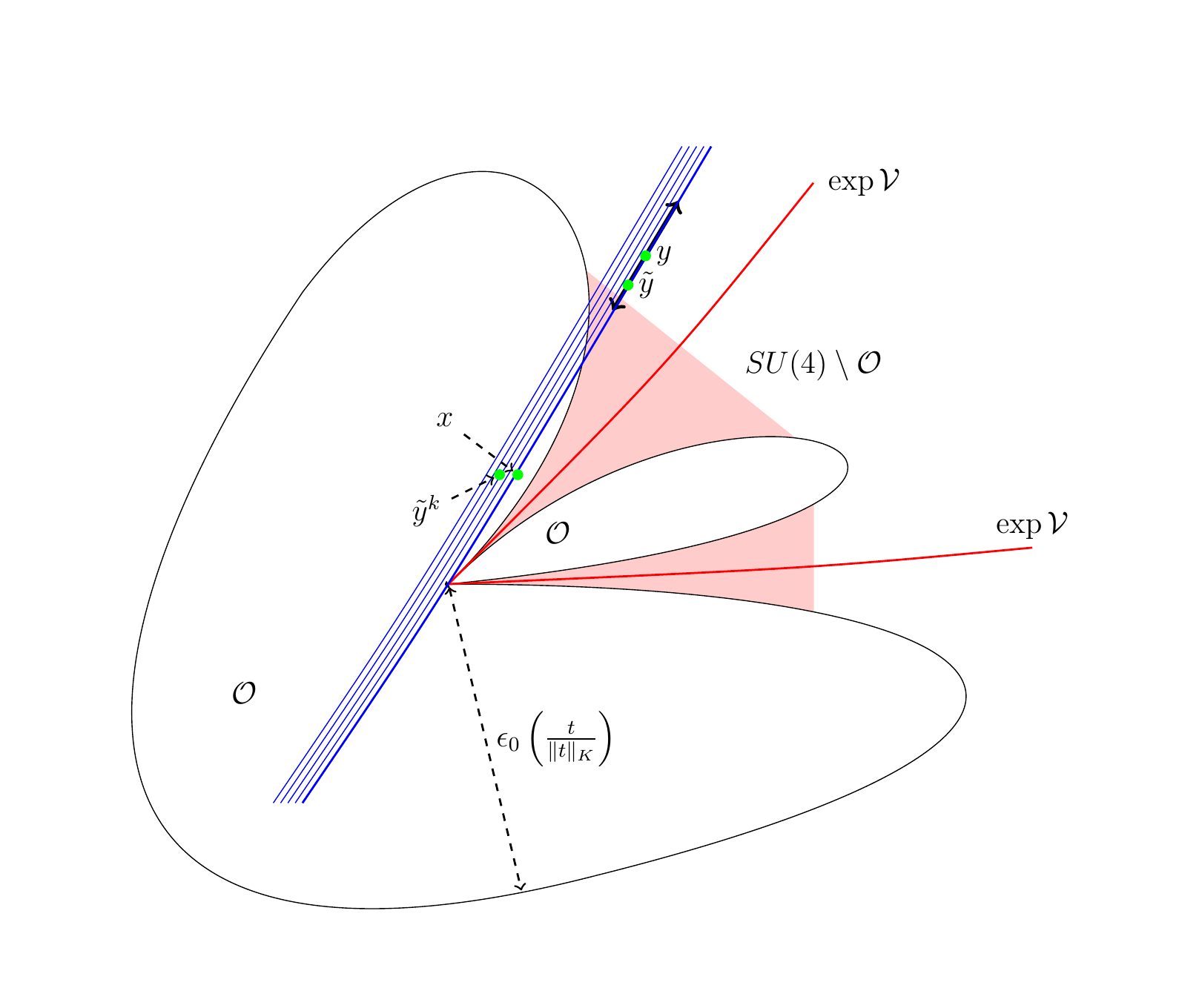}
  \caption{(Color online) Illustration for the proof of Theorem 1. Here, the parallel blue lines indicate $y$'s one-parameter subgroup, $P_y$. Shaded regions around $\exp \mathcal V$ indicate regions where $\Vert t \Vert_K \geq \epsilon_0(t/\Vert t \Vert_K)$. \label{fig} }
\end{figure}

Now consider the situation illustrated in Fig.~\ref{fig}.
For $y \in \mathcal{U}$, let $I$ be an arbitrarily small interval surrounding $y$ in its one-parameter subgroup. Using Lemma 2 and the openness of $\mathcal O$, we can obtain a $\tilde{y}$ such that $\tilde{y}^k \in \mathcal{O}$. Specifically, $\tilde{y}^k$ is required to approximate some small power $x=y^\delta$, $\delta \ll 1$, on the intersection of $y$'s 1-parameter subgroup with $\mathcal O$, $P_y \cup \mathcal O$. This establishes that $\mathcal U$ is dense in $SU(4) \setminus C$ and therefore dense in $SU(4)$, completing the proof of the theorem.
\end{proof}

\section{Appendix: Alternative proof}
\begin{lemma}
The complement of a proper algebraic variety over $\mathbb{R}$ (or $\mathbb{C}$) is dense (as well as open) in the usual topology on $V=\mathbb{R}^n$ (or $\mathbb{C}^n$).
\end{lemma}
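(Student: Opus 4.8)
The plan is to reduce the statement to the elementary one-variable fact that a nonzero polynomial has only finitely many roots, and to transport this into $\mathbb{R}^n$ (or $\mathbb{C}^n$) by restricting $f$ to a well-chosen line.

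First I would reduce a variety cut out by several polynomials to a single hypersurface. Write $\mathcal{V} = \{x \in V : f_1(x) = \cdots = f_k(x) = 0\}$ for polynomials $f_j$ (by the Hilbert basis theorem every variety admits such a description). If $\mathcal{V} \subsetneq V$, then there is a point at which some $f_j$ does not vanish, so that single polynomial $f := f_j$ is not identically zero, and $\mathcal{V} \subseteq \{f = 0\}$; hence it suffices to prove that $\{x : f(x) \neq 0\}$ is open and dense. Openness is immediate from continuity of $f$.

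For density, fix a point $p \in V$ and choose any $w$ with $f(w) \neq 0$, which exists because $f \not\equiv 0$. Restricting $f$ to the line through $p$ and $w$ gives $h(t) := f\bigl(p + t(w-p)\bigr)$, a polynomial in the single variable $t$ with $h(1) = f(w) \neq 0$; thus $h$ is not the zero polynomial and has at most $\deg h$ roots. Consequently there are values of $t$ arbitrarily close to $0$ with $h(t) \neq 0$, and the corresponding points $p + t(w-p)$ lie arbitrarily close to $p$ in the complement of $\mathcal{V}$. Since $p$ was arbitrary, the complement is dense. The complex case is verbatim the same, with $t$ ranging over $\mathbb{C}$ and using that a nonzero one-variable complex polynomial has finitely many zeros. (Alternatively one could induct on $n$, splitting off the last variable and invoking density in $\mathbb{R}^{n-1}$ for the leading coefficient; the line-restriction argument is shorter.)

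I do not expect a genuine obstacle here — the only point requiring care is that the restriction $h$ is an honestly nonzero polynomial, and this is precisely where the hypothesis that $\mathcal{V}$ is proper enters: a nonvanishing point $w$ is exactly what pins down a line on which $f$ does not vanish identically. In the application to the theorem, $\mathrm{det}$ is a polynomial in the $15$ real coordinates of $t \in su(4)$ (the iterated Lie brackets depend polynomially on $t$, as does their determinant), and the brute-force computation exhibits a specific $t_0$ with $\mathrm{det}(t_0) \neq 0$, so $\mathrm{det}$ is not the zero polynomial; the lemma then yields at once that $\{t : \mathrm{det}(t) \neq 0\}$ is the required open dense subset $\mathcal{Q} \subset su(4)$.
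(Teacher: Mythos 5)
Your proof is correct, but it follows a genuinely different route from the paper's. You reduce to a single nonvanishing polynomial $f$, restrict $f$ to the line through an arbitrary point $p$ and a witness $w$ with $f(w)\neq 0$, and invoke the fact that a nonzero one-variable polynomial has finitely many roots; this is the standard, most elementary argument. The paper instead proves a ``Double Density Lemma'': the two real embeddings of $F=\mathbb{Q}(\sqrt{2})$ give a map $j\colon F\to\mathbb{R}\times\mathbb{R}$ with dense image, and the nontrivial Galois automorphism $g$ of $F$ therefore scatters a small neighborhood of a point off $\mathcal V$ (with $F$-coordinates) densely over $\mathbb{R}^n$; since $g$ is a field automorphism it preserves the zero set of the defining polynomials, so images land in $V\setminus\mathcal V$ and hit any target cube. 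Your version is shorter, fully self-contained, and works without any hypothesis on the coefficients of $f$; the paper's Galois step tacitly requires the defining polynomials to be preserved by $g$, i.e.\ to have coefficients in $\mathbb{Q}$ (or at least $g$-fixed), which happens to hold for the $\mathrm{det}$ polynomial in the application but is not literally the full generality the lemma asserts. What the paper's route buys is aesthetic and pedagogical: it advertises that the density statement can be seen as the simplest instance of the weak approximation theorem in algebraic number theory. Both proofs establish the lemma as needed in the theorem, and your closing remark correctly identifies why $\mathrm{det}$ is a nonzero polynomial in the $15$ real coordinates of $su(4)$.
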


\begin{proof}
First, a complex variety in $\mathbb{C}^n$ is also a real variety in $\mathbb{R}^{2n}$, so it is sufficient to consider the real case. To give an elementary proof, we first establish a lemma which is merely the simplest case of what is called the ``weak approximation theorem'' in the theory of adeles \cite{ant}. We thank Jeff Stopple and Keith Conrad for this reference.

Let $F$ be the number field $F = Q(\alpha) / \alpha^2 = 2$, and let $j_+(\alpha) = \sqrt{2}$ and $j_-(\alpha) = -\sqrt{2}$ define the two possible embeddings $j_\pm : F\longrightarrow\text{Reals} := R$. There is a Galois automorphism (involution) $g : F\longrightarrow F$, $g(\alpha) = -\alpha$ so that $j_+\circ g = j_-$ and $j_-\circ g = j_+$.

\begin{lemma}[Double Density Lemma]
The map $j: F\longrightarrow \mathbb{R} \times \mathbb{R}$ defined by $j(f) = (j_+(f), j_-(f))$ has a dense image with respect to the usual metric topology.
\end{lemma}

\begin{proof}
Fix any point $(p,q)\in \mathbb{R} \times \mathbb{R}$ and consider the equations:
$$x + \sqrt{2} y = p,$$
$$x - \sqrt{2} y = q.$$
Over $R$ they may be solved by $x = \frac{1}{2}(p + q)$ and $y = \frac{\sqrt{2}}{4}(p-q)$. Choosing rational approximations $x_0$ to $x$ and $y_0$ to $y$ we see that $j_+(x_0 + \sqrt{2} y_0)$ approximates $p$ and $j_-(x_0 + \sqrt{2} y_0)$ approximates $q$ to any desired precision.
\end{proof}

Of course both embeddings $j_+$ and $j_-$ are individually dense in $R$. Let $\mathcal{V} \subset V$ be a proper variety, i.e. $\mathcal{V} \neq V$, and let $t \in V \setminus \mathcal{V}$.
Consider a cubical neighborhood $\tau_{\epsilon}(t) = \lbrace s \ \big|\ \forall\ 0 < k \leq n: t_k - \epsilon < s < t_k + \epsilon \rbrace$. The double density lemma applied to each of the $n$ coordinates implies that for any $\epsilon > 0$ and any $t' \in V$, we can find an $s \in \tau_\epsilon(t')$ so that $g(s) \in \tau_\epsilon(t')$. Put another way, the Galois involution $g^{-1} = g$ scatters those elements of an $\epsilon$-neighborhood of $t_0$ with field $F$ coordinates uniformly over all of $V$, to form a dense set $S$ which must meet $\tau_\epsilon(t')$.

But the defining equation of $V$ is \emph{algebraic}, i.e. polynomial, so it holds equally before or after application of the field automorphism $g$. Consequently the sets $\mathcal V$ and $V \setminus \mathcal V$ are both preserved by $g$. Thus $V \setminus \mathcal V$ meets each $\tau_\epsilon (t')$, as above, and hence is dense.

\end{proof}

\section*{Acknowledgements}

We thank Adam Bouland for pointing out Ref.~\cite{Lloyd}, and L. Mancinska for pointing out Refs.~\cite{Deutsch95,Weaver2000,Childs2010}.

\end{document}